\newtheorem {theorem}    {Theorem}[section]
\newtheorem {lemma}      [theorem]    {Lemma}
\newtheorem {corollary}  [theorem]    {Corollary}
\newtheorem {prop}[theorem]    {Proposition}
\numberwithin{equation}{section}
\renewcommand{\u}{\underline}
\title{Archimedean zeta integrals on $U(n,1)$}
\author{Bingchen Lin}
\address{School of Mathematics, Sichuan University, Chengdu 610065, P.R. China}
\email{malinbc@scu.edu.cn}
\author{Dongwen Liu}
\address{School of Mathematical Science, Zhejiang University, Hangzhou 310027, P.R. China}
\email{maliu@zju.edu.cn}
\subjclass[2010]{Primary 22E45; Secondary 11F27, 11F67}
\begin{document}
\maketitle

\begin{abstract}
For a dual pair of unitary groups with equal size, zeta integrals arising from Rallis inner product formula give the central values of certain automorphic $L$-functions. In this paper we explicitly calculate archimedean zeta integrals of this type for $U(n,1)$, assuming that the corresponding archimedean component of the automorphic representation is a holomorphic discrete series.
\end{abstract}

\section{Introduction}

In order to obtain deep arithmetic applications in the theory of automorphic forms, it is often necessary to have explicit computable results at each place of a number field. This paper is concerned with certain archimedean zeta integrals on unitary groups and central $L$-values, which arise from theta correspondence of cuspidal automorphic representations. We shall briefly explain the motivation and background of this paper, following \cite{H, HLS, L2}.

Let $F^+$ be a totally real number field, $F$ a totally imaginary quadratic extension of $F^+$, ${\bf A}={\bf A}_{F^+}$ the adele ring of $F^+$. Let $V$ (resp. $V'$) be a hermitian (resp. skew-hermitian) vector space of dimension $n+1$ over $F$, and $W=V\otimes_{F}V'$, a
symplectic space over $F^+$. Fixing an additive character $\psi$ and a complete polarization $W=X\oplus Y$, we have the Schr\"{o}dinger model of the
oscillator representation $\omega_\psi$ of $\widetilde{Sp}(W)({\bf A})$, realized on the space ${\mathcal S}(X({\bf A}))$ of Schwartz-Bruhat functions on $X({\bf A})$.
Let $G=U(V),$ $G'=U(V')$. By choosing a global splitting character $\chi$ of ${\bf A}_{F}^\times/F^\times$ as in \cite{HLS}, $\omega_\psi$ then defines an oscillator representation $\omega_{V,V',\chi}$ of $G({\bf A})
\times G'({\bf A})$ on ${\mathcal S}(X({\bf A}))$. As usual, for $\phi\in {\mathcal S}(X({\bf A}))$ we have the theta lifting $f\mapsto \theta_\phi(f)$ for a cusp form $f$ on $G(F^+)\backslash G({\bf A})$.

Let $\pi$ be a cuspidal automorphic representations of $G$, $f\in\pi$, $\tilde{f}\in\pi^\vee$.
Let $H=U(V\oplus (-V))$, $i_V: G\times G\hookrightarrow H$
be the natural inclusion, following the doubling method. The Piatetski-Shapiro-Rallis zeta integral is then defined by
\begin{equation}
Z(s, f, \tilde{f},\varphi,\chi)=\int_{(G\times G)(F^+)\backslash (G\times G)({\bf A})}E(i_V(g,\tilde{g}), s, \varphi, \chi)f(g)\tilde{f}(\tilde{g})\chi^{-1}\big(\det(\tilde{g})\big)dg d\tilde{g},
\end{equation}
where $E(\cdot, s,\varphi, \chi)$ is the Eisenstein series on $H({\bf A})$ as in \cite[\S 1]{H}, and $\varphi=\varphi(s)$ is a section of a degenerate principal series
$I_{n+1}(s,\chi)$ varying in $s$. This integral converges absolutely for $\textrm{Re }s\gg0$ and admits an Euler expansion if $\varphi$, $f$ and $\tilde{f}$ are factorizable. In this case,
for $S$ a sufficiently large finite set of places of $F^+$ including archimedean ones, one has
\[
Z(s, f, \tilde{f}, \varphi, \chi)=\prod_{v\in S}Z(s, f_v, \tilde{f}_v, \varphi_v, \chi_v)d_{n+1}^S(s)^{-1}L^S(s+\frac{1}{2},\pi, St,\chi),
\]
where $L^S(s+\frac{1}{2},\pi, St,\chi)$ is the partial $L$-function of $\pi$ twisted by $\chi$, attached to $2(n+1)$-dimensional standard representation of the $L$-group, and
$d_{n+1}^S(s)$ is a product of certain partial $L$-functions attached to the extension $F/F^+$ as in \cite{H}. Take $\phi=\otimes_v\phi_v\in {\mathcal S}(X({\bf A}))$ and $\varphi=\delta(\phi\otimes\bar{\phi})$ in the notation of \cite[p.182]{L2}. Then after proper
normalization the {\it Rallis inner product
formula} can be written as
\begin{equation}
\langle\theta_\phi(f),\theta_{\bar{\phi}}(\bar{f})\rangle=\prod_{v\in S}Z(0, f_v,\bar{f}_v,\varphi_v,\chi_v)d_n^S(0)^{-1}L^S(\frac{1}{2},\pi, St,\chi),
\end{equation}

The central $L$-value $L(\frac{1}{2},\pi, St,\chi)$ is of great arithmetic interest and it is quite useful to have explicit local value at each place. In \cite{HLS} under certain assumptions it is shown that
$L^S(\frac{1}{2},\pi, St,\chi)\geq 0$ for any finite set $S$ of places of $F^+$.
As explained in \cite[\S2]{L2}, one has
\[
Z(0,f_v, \bar{f}_v, \varphi_v,\chi_v)=\int_{G(F_v^+)}(\omega_{\chi_v} (g)\phi_v, \phi_v) (\pi_v(g)f_v, f_v)dg,
\]
which integrates matrix coefficient of the oscillator representation against that of $\pi_v$. From now on we assume that $v$ is real, $\pi_v$ is in the discrete series, $\phi_v$ is in the space of joint harmonics, and we replace
$(\pi_v(g)f_v, f_v)$ by a canonical matrix coefficient $\psi_{\pi_v}(g)$ of $\pi_v$ (see Section \ref{s4}). The aim of this paper is to explicitly compute the archimedean zeta integral
\begin{equation} \label{zeta'}
\int_{G(F_v^+)}(\omega_{\chi_v}(g)\phi_v,\phi_v)\cdot \psi_{\pi_v}(g)dg
\end{equation}
in the case that $G(F_v^+)$ is the real unitary group $U(n,1)$ and $\pi_v$ is a holomorphic discrete series. We mention that the cases $U(1,1)$ and $U(2,1)$ were solved completely in \cite{Lin} and \cite{Liu} respectively. However for $U(n,1)$ when $\pi_v$ is a general discrete series, this problem seems to out of reach at the moment.

The main results of this paper can be formulated as follows. Fix an additive character $\psi$ of ${\bf R}$. Let $V$ be an $(n+1)$-dimensional complex
Hermitian space, and let $G$ be the unitary group attached to $V$. For each
complex skew-Hermitian space $V'$, the group $G$ is a subgroup of
the real symplectic group $Sp(V\otimes_{\bf C} V')$ as usual. Define the metaplectic double cover $\widetilde{G}$
of $G$ to be the double cover of $G$ induced by the metaplectic double
cover $\widetilde{Sp}(V\otimes_{\bf C}V')\to Sp(V
\otimes_{\bf C}V')$. This is independent of $V'$. 
Let $\pi_\lambda$ be the genuine discrete series
representation of $\widetilde{G}$
with Harish-Chandra parameter $\lambda:=(\lambda_1,\ldots, \lambda_{n+1})$. By theta dichotomy for real unitary groups \cite{P} and a result in \cite{L1} on discrete spectrum of local theta correspondence, up to isometry there exists a unique $(n+1)$-dimensional skew-Hermtian space $V'$ such that $\pi^\vee_\lambda$ occurs as a subrepresentation of $\omega_{V,V',\chi}$. Let $P_\lambda:\omega_{V,V',\chi}\to \omega_{V,V',\chi}$ be the orthogonal projection to the $\pi^\vee_\lambda$-isotypic subspace.
Fix a maximal compact subgroup $K$ of $G$, which induces a maximal compact
subgroup $\widetilde{K}$ of $\widetilde{G}$.
Denote by $\tau^\vee_\lambda$ the lowest $\widetilde{K}$-type of $\pi^\vee_\lambda$. Then there is a positive number $c_{\psi, V, \lambda}$ such that
\[
\| P_\lambda(\phi)\| = c_{\psi, V, \lambda} \|\phi\|
\]
for all $\phi$ in the  $\tau^\vee_\lambda$-isotypic subspace of the space of joint harmonics
(with respect to $K$ and an arbitrary maximal compact subgroup of the unitary
group attached to $V'$). The constant $c_{\psi,V,\lambda}$ is 1 when either $V$ or $V'$ is anisotropic. The main result of this paper is equivalent to an explicit
calculation of $c_{\psi, V, \lambda}$ when $V$ is of signature $(n,1)$, $\psi$ is chosen to be $\psi_a: t\mapsto e^{2\pi i a t}$ for some $a>0$, and $\pi_\lambda$ is holomorphic. In this case we list the explicit values of $c_{\psi,V,\lambda}$ below (Corollary \ref{7.3}).

\begin{theorem}
Follow above notations, assume that $V$ has signature $(n,1)$, $\psi=\psi_a$ for some $a>0$ and $\pi_\lambda$ is holomorphic. Let
\[
\Lambda=\lambda+(-\frac{n}{2}+1,-\frac{n}{2}+2,\ldots, \frac{n}{2}, -\frac{n}{2}).
\]
Let $\alpha$'s, $\beta$'s, $\gamma$, $p, q$ below stand for non-negative integers with $p+q=n+1$. Then
\\
(i) if $\Lambda=[(\alpha_1,\ldots,\alpha_n)+\det^{-(1-n)/2}]\otimes[\gamma+\det^{(1-n)/2}]$ with $\alpha_1\geq\cdots\geq \alpha_n\geq\gamma+2$, then
\[
c^2_{\psi,V,\lambda}=\prod^n_{i=1}\frac{\alpha_i-i+n-1-\gamma}{\alpha_i-i+n};
\]
(ii) if $\Lambda=[(\alpha_1,\ldots,\alpha_{q-1},-\beta_p,\ldots,-\beta_1)+\det^{-(p-q)/2}]\otimes[-\gamma+\det^{(p-q)/2}]$ with $\alpha_1\geq\cdots\geq\alpha_{q-1}\geq -\beta_p\geq\cdots\geq -\beta_1\geq -\gamma+2p$, then
\[
c^2_{\psi,V,\lambda}=\prod^n_{i=1}\frac{\gamma+i-\delta_i-2p}{\gamma+i-p},
\]
where $(\delta_1,\ldots,\delta_n):=(\beta_1,\ldots,\beta_p,-\alpha_{q-1},\ldots,-\alpha_1)$.
\end{theorem}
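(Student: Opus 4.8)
The plan is to reduce the computation of $c_{\psi,V,\lambda}$ to a ratio of two explicitly computable quantities: the norm of a distinguished vector in the space of joint harmonics, and the norm of its image under the theta projection $P_\lambda$. Since both are $\widetilde K$-equivariant maps and we restrict to the $\tau^\vee_\lambda$-isotypic subspace, by Schur's lemma it suffices to evaluate everything on a single highest (or lowest) weight vector of that $\widetilde K$-type. So the first step is to pin down, in the Fock model (or Schr\"odinger model) of $\omega_{V,V',\chi}$, an explicit Schwartz function $\phi_\lambda$ spanning the relevant line in the joint harmonics, using the classical description of joint harmonics for the dual pair $(U(p,q),U(r,s))$ in terms of pluriharmonic polynomials, specialized to $V$ of signature $(n,1)$. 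Because $V$ has a one-dimensional negative part, the relevant "harmonic" conditions decouple into a holomorphic part on the $\mathbf C^n$ factor and an anti-holomorphic part on the $\mathbf C^1$ factor, which is exactly the source of the product-over-$i$ shape of the answer.

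The second step is to compute $\|P_\lambda(\phi_\lambda)\|^2/\|\phi_\lambda\|^2$ by realizing this ratio as an archimedean doubling zeta integral as in \eqref{zeta'}. Concretely, $\|P_\lambda(\phi)\|^2$ equals, up to the normalization constants already fixed in the introduction, the integral $\int_{G}(\omega_{\chi}(g)\phi,\phi)\,\psi_{\pi_\lambda^\vee}(g)\,dg$, where $\psi_{\pi_\lambda^\vee}$ is the canonical matrix coefficient. For $G=U(n,1)$ and $\pi_\lambda$ holomorphic, one can push this integral to the Hermitian symmetric domain $G/K$ (a complex ball) and use an explicit formula for the holomorphic discrete series matrix coefficient together with the explicit Gaussian-type integrand coming from $\phi_\lambda$ in the Fock model. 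The integral then becomes a beta-type integral over the unit ball in $\mathbf C^n$, whose value is a product of Gamma factors. Taking the ratio against $\|\phi_\lambda\|^2$ — itself a Fock-space norm, i.e.\ a ratio of factorials — gives the stated product. The translation between Harish-Chandra parameter $\lambda$ and the lowest $\widetilde K$-type data, encoded through $\Lambda=\lambda+(-\tfrac n2+1,\dots,\tfrac n2,-\tfrac n2)$, is precisely the rho-shift bookkeeping needed to match the branching of $\tau_\lambda^\vee$ under $\widetilde K\cong \widetilde{U(n)}\times\widetilde{U(1)}$; the two cases (i) and (ii) correspond to the two positions the negative coordinate $-\tfrac n2$ can occupy relative to the $U(n)$-highest weight after the shift, equivalently to whether $V'$ (the target skew-Hermitian space) is definite or indefinite.

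I expect the main obstacle to be the bookkeeping of normalizations and, relatedly, identifying exactly which vector in the joint harmonics is the correct one and what its Fock-space norm is. There are several interlocking normalizations — the splitting character $\chi$, the additive character $\psi_a$, the metaplectic cover, the choice of Haar measure on $G(F_v^+)$, the normalization of $\psi_{\pi_\lambda}$, and the inner product on $\omega_{V,V',\chi}$ — and a slip in any one of them would shift the answer by a spurious Gamma factor. A secondary difficulty is the explicit evaluation of the ball integral when $V'$ is indefinite (case (ii)): there the matrix coefficient of the holomorphic discrete series is still a power of a Bergman-type kernel, but the relevant pluriharmonic polynomial $\phi_\lambda$ now carries a nontrivial $K'$-type, so the integrand is a polynomial-times-power-of-kernel and one must compute $\int_{\mathrm{ball}} |P(z)|^2 (1-|z|^2)^{s}\,dz$ for the specific $P$ coming from the $GL_n$-highest weight $(\alpha_1,\dots,\alpha_{q-1},-\beta_p,\dots,-\beta_1)$; this is where the substitution $(\delta_1,\dots,\delta_n)=(\beta_1,\dots,\beta_p,-\alpha_{q-1},\dots,-\alpha_1)$ enters, reindexing the product into a single clean form. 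Once the correct vector and its norm are identified and the ball integral is evaluated by a standard Gamma-function computation, assembling cases (i) and (ii) is routine.
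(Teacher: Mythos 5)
Your outline is pointed in the right direction --- Fock model, explicit joint-harmonic highest weight vector, reduction to an integral over the ball $D_{n,1}$, evaluation via Gamma/Beta functions, translation between $\lambda$ and $\Lambda$ via the $\rho$-shift --- and all of these are indeed the moving parts in the paper's argument (Sections \ref{s5}--\ref{s7}). The paper also uses Schur orthogonality to do the $K$-integration and Schur's lemma to see the remaining operator $T_s^\pm\in\operatorname{End}(\sigma_{\Lambda^\vee})$ is scalar, which is implicit in your appeal to ``evaluate everything on a single highest weight vector.''

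However there is a genuine gap in your plan at the step where you turn the zeta integral into the constant $c_{\psi,V,\lambda}$. The zeta integral does not compute $\|P_\lambda(\phi)\|^2/\|\phi\|^2$; rather, by the HLS formula recorded in the paper,
\[
\int_G \langle\omega(g)\phi,\phi\rangle\cdot\psi_\pi(g)\,dg \;=\; \frac{c^2_{\psi,\pi}}{d_\pi}\,\|\phi\|^2,
\]
where $d_\pi$ is the formal degree of $\pi_\lambda$, which depends on the Haar measure on $G$. So dividing the ball integral by $\|\phi\|^2$ (which you propose) yields $c^2_{\psi,\pi}/d_\pi$, not $c^2_{\psi,\pi}$, and both that quotient and $d_\pi$ are measure-dependent. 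You acknowledge normalizations as ``the main obstacle,'' but you do not say how to surmount it, and the resolution is a specific idea and not merely bookkeeping: the paper uses part (iii) of Garrett's lemma (Lemma \ref{gar}), which expresses $1/d_\pi = S_{\sigma_{\Lambda^\vee},0}/\dim\sigma_\Lambda$, i.e.\ the reciprocal formal degree as the same kind of ball integral but evaluated at $s=0$ and with the same measure normalization. Combining this with the zeta integral evaluated at $s=(n+1)/2$ gives $c^2_{\psi,\pi}=T^\pm_{(n+1)/2}/S_{\sigma_{\Lambda^\vee},0}$, a ratio of two Gamma-products in which the arbitrary measure constant cancels. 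Without this companion computation of $d_\pi$ by the same integral, the proposal cannot produce the stated closed-form answer. (A secondary, smaller inaccuracy: the dichotomy between cases (i) and (ii) is governed by the sign of the $U(1)$-component of $\Lambda^\vee$, equivalently by the sign of $\lambda_{n+1}$; it is not the dichotomy ``$V'$ definite vs.\ indefinite'' --- in case (i) one always has $G'=U(1,n)$, which is indefinite for $n\geq 1$.)
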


The organization of the paper is as follows. In section \ref{s2} we give the pair of weights appearing in the local theta correspondence. Section \ref{s3} describes the structure and measure of the real Lie group $U(n,1)$. Section \ref{s4} deals with the canonical matrix coefficient of a holomorphic discrete series following \cite{G}. Sections \ref{s5} and \ref{s6} are concerned with the matrix coefficient of oscillator representation, which is calculated using joint harmonics. In section \ref{s7} we combine previous results and apply the technique of \cite{G} to evaluate the zeta integral. 

We remark that the method of this paper should be applicable to general $U(p,q)$, at least when one of the components of the lowest $\widetilde{K}$-type is one-dimensional. Furthermore, it also brings us some enlightenment to study certain period integrals for unitary groups.

{\bf Notations.} Let $1_n$ and $0_n$ be the $n\times n$ identity matrix and zero matrix respectively. Let $1_{p,q}$ stand for the square matrix
\[
\begin{pmatrix} 1_p & 0 \\ 0 & -1_q\end{pmatrix}.
\]
In this paper, $U(p,q)$ is the real unitary group of the hermitian or skew-hermitian form represented by
the matrix $1_{p,q}$ or $i1_{p,q}$, where $i=\sqrt{-1}$, and $Sp_{2N}({\bf R})$ is the isometry group of the real symplectic form represented by the matrix
\[
\begin{pmatrix} 0 & 1_N \\ -1_N & 0\end{pmatrix}.
\]
For a complex matrix $g$, let ${}^tg$ be its transpose, and $g^*={}^t\bar{g}$ be the complex conjugate transpose. For a field $k$, $M_n(k)$ is the set of $n\times n$ matrices with entries in $k$.  We usually regard vectors in $k^n$ as column vectors, unless otherwise specified. For $u, v\in k^n$,  as usual $u\cdot v$ stands for their dot product, and $|u|^2=u\cdot \bar{u}$ if $k={\bf R}$ or ${\bf C}$.

{\bf Acknowledgement.} Both authors would like to thank Professor J.-S. Li for suggesting this problem. This joint work was started during the workshop ``Automorphic Forms, Geometry and Representation Theory" held at Zhejiang University in July 2015. Both authors thank the hospitality of the organizers.

\section{Pair of weights} \label{s2}

Let $G=U(n,1)$ be the unitary group of a complex hermitian space of signature $(n,1)$. The absolute root system of $G_{\bf{C}}=GL(n+1,\bf{C})$ is of type $A_n$.
Fix the maximal compact subgroup $K=U(n)\times U(1)$, the set of compact positive roots $\Delta^+_c=\{e_i-e_j: 1\leq i<j\leq n\}$, and the set of positive roots
$\Delta^+=\{e_i-e_j: 1\leq i<j\leq n+1\}$ that contains $\Delta_c^+$.

We consider $\Delta_c^+$-dominant Harish-Chandra parameters of genuine discrete series of $\widetilde{G}$. Those of holomorphic discrete series  are in fact $\Delta^+$-dominant, i.e. strictly decreasing $(n+1)$-tuples $\lambda=(\lambda_1,\ldots,\lambda_{n+1})$ of half-integers. The corresponding lowest $\widetilde{K}$-type is given by
the Blattner parameter
\begin{equation}\label{hc}
\Lambda=\lambda+\rho-2\rho_c=\lambda+(-\frac{n}{2}+1,-\frac{n}{2}+2,\ldots, \frac{n}{2}, -\frac{n}{2}),
\end{equation}
where $\rho$ (resp. $\rho_c$) is the half sum of all positive (resp. compact positive) roots.

Consider the dual pair $(G,G')=(U(n,1),U(p,q))\hookrightarrow Sp_{2N}(\bf{R})$, where $p+q=n+1$ and $N=(n+1)^2$. Fix the additive character $\psi: t\mapsto e^{2\pi it}$ of ${\bf R}$, and consider the oscillator representation $\omega_\psi$ of $\widetilde{Sp}_{2N}({\bf R})$. Take an irreducible $\widetilde{K}\times \widetilde{K}'$-module
\[
\mathcal{H}_{\Lambda^\vee,\Lambda'}\cong \sigma_{\Lambda^\vee}\otimes \sigma_{\Lambda'}
\]
that occurs in the space of joint harmonics of $\omega_\psi$, where $\Lambda^\vee$ and $\Lambda'$ are the highest weights of $\sigma_{\Lambda^\vee}$
and $\sigma_{\Lambda'}$ respectively. It is well-known that $\mathcal{H}_{\Lambda^\vee, \Lambda'}$ occurs with multiplicity one, and moreover $\Lambda^\vee$ and $\Lambda'$ determine each other. Let $\sigma_\Lambda$ be the contragradient of $\sigma_{\Lambda^\vee}$, which has highest weight $\Lambda$.

Assume that $\sigma_\Lambda$ is the lowest $\widetilde{K}$-type of the holomorphic discrete series $\pi_\lambda$ of $\widetilde{G}$ so that $\lambda$ and $\Lambda$ are related by (\ref{hc}), and that the theta lifting $\pi'=\theta(\pi_{\lambda}^\vee)$ of $\pi_\lambda^\vee$ is a non-zero discrete series of $\widetilde{G}'$.

The  Harish-Chandra parameter of the anti-holomorphic discrete series $\pi_\lambda^\vee$ is $\lambda^\vee=(-\lambda_n,\ldots, -\lambda_1, -\lambda_{n+1})$, and one has
\[
\Lambda^\vee=\lambda^\vee+(-\frac{n}{2},-\frac{n}{2}+1,\ldots, \frac{n}{2}-1, \frac{n}{2}).
\]
 Let $a$ and $b$ be the number of non-negative entries in $(-\lambda_n,\ldots, -\lambda_1)$ and $(-\lambda_{n+1})$ respectively. Then by \cite{L1}, above assumption requires that
\[
\lambda_n> \lambda_{n+1}\quad\textrm{and}\quad
p=a-b+1.
\]
Let us write
\begin{equation}\label{lamv}
\Lambda^\vee=[(\beta_1,\ldots,\beta_l,0,\ldots,0,-\alpha_k,\ldots,-\alpha_1)+\textrm{det}^{(p-q)/2}]
\otimes [m+\textrm{det}^{-(p-q)/2}],
\end{equation}
where $\alpha_1\geq\cdots\geq \alpha_k>0$, $\beta_1\geq\cdots\geq \beta_l>0$. Then
\[
\Lambda=[(\alpha_1,\ldots,\alpha_k,0,\ldots,0,-\beta_l,\ldots,-\beta_1)+\textrm{det}^{-(p-q)/2}]
\otimes[-m+\textrm{det}^{(p-q)/2}].
\]
We have two cases.

Case (i): $b=0$. Then $\lambda_n>\lambda_{n+1}>0$, which implies that $a=p-1=0$ hence $p=1$, $q=n$, i.e. $G'=U(1,n)$. The first entry of $\Lambda^\vee$ is
\[
-\lambda_n-\frac{n}{2}<\frac{p-q}{2}=\frac{1-n}{2},
\]
which by (\ref{lamv}) implies that $l=0$, $k=n$. Let
\[
\gamma:=-m=\lambda_{n+1}-\frac{1}{2}\geq 0.
\]
By the formulas for the pair of weights $\Lambda^\vee$, $\Lambda'$ in \cite{L1}, we see that
\begin{equation}\label{w1}
\left\{\begin{array}{ll}
\Lambda^\vee=[(-\alpha_n,\ldots,-\alpha_1)+\textrm{det}^{(1-n)/2}]
\otimes [-\gamma+\textrm{det}^{-(1-n)/2}],
\\
\Lambda'=[-\gamma+\textrm{det}^{(n-1)/2}]\otimes[(-\alpha_n,\ldots,-\alpha_1)+\textrm{det}^{-(n-1)/2}].\end{array}\right.
\end{equation}
The condition $\lambda_n>\lambda_{n+1}$ reads
\[
\alpha_n\geq\gamma+2.
\]

Case (ii): $b=1$. Then $\lambda_{n+1}\leq 0$, $a=p$. Let
\[
\gamma:=m=-\lambda_{n+1}+\frac{n}{2}+\frac{p-q}{2}>0.
\]
Again by \cite{L1} we have
\[
\Lambda'=[(\beta_1,\ldots,\beta_l,0,\ldots,0)+\textrm{det}^{(n-1)/2}]\otimes[(\gamma,0,\ldots,0, -\alpha_k,\ldots,-\alpha_1)+\textrm{det}^{(1-n)/2}].
\]
Note that the obvious constraints $l\leq p$, $k+1\leq q$ apply. For convenience let us define $\beta_i=0$, $\alpha_j=0$ for $l<i\leq p$ and $k<j\leq q-1$, so that we may write
\begin{equation}\label{w2}
\left\{\begin{array}{ll}
\Lambda^\vee=[(\beta_1,\ldots,\beta_p, -\alpha_{q-1},\ldots,-\alpha_1)+\textrm{det}^{(p-q)/2}]
\otimes [\gamma+\textrm{det}^{-(p-q)/2}],
\\
\Lambda'=[(\beta_1,\ldots,\beta_p)+\textrm{det}^{(n-1)/2}]\otimes[(\gamma, -\alpha_{q-1},\ldots,-\alpha_1)+\textrm{det}^{-(n-1)/2}].
\end{array}\right.
\end{equation}
 The condition $\lambda_n> \lambda_{n+1}$ reads
\[
-\beta_1\geq-\gamma+2p.
\]

\section{Structure of $G$} \label{s3}

Let $\frak{g}=\frak{u}(n,1)$ be the Lie algebra of $G$, and $\frak{g}=\frak{k}\oplus\frak{p}$ be the Cartan decomposition with respect to the Cartan involution $\theta(X)=-X^*$. Let $\frak{a}$ be the maximal abelian subalgebra of $\frak{p}$, which is one-dimensional and spanned by, say,
\[
H=E_{1,n+1}+E_{n+1,1},
\]
where $E_{ij}$ is the elementary matrix with $1$ on the $(i,j)$-entry and $0$ everywhere else.
Let
\[
a_t=\exp(tH)=\begin{pmatrix}
\cosh t & 0 & \sinh t \\ 0 & 1_{n-1} & 0\\
\sinh t & 0 & \cosh t
\end{pmatrix}.
\]
The Cartan decomposition of $G$ is  $G=C\cdot K \cong C\times K$, where
\[
C=\{g\in G: g=g^* \textrm{ is positive-definite hermitian}\}.
\]
We normalize the measure on $K=U(n)\times U(1)$ so that the masses of $U(n)$ and $U(1)$ are both equal to 1.
The set $C$ can be parametrized by
\begin{equation}\label{c}
D_{n,1}\to C,\quad z\mapsto h_z=\begin{pmatrix} (1_n-zz^*)^{-1/2} & z(1-z^*z)^{-1/2}\\
(1-z^*z)^{-1/2}z^* & (1-z^*z)^{-1/2}\end{pmatrix}
\end{equation}
where $D_{n,1}$ is the classical domain
\[
D_{n,1}=\{z\in {\bf C}^n: 1_n-zz^*\textrm{ is positive definite}\}.
\]
$G$ acts on $D_{n,1}$ by generalized fractional linear transformations, and we fix the  invariant measure on $D_{n,1}$ to be
\[
d^*z=\frac{dz}{(1-z^*z)^{n+1}}=\frac{dz}{\det(1_n-zz^*)^{n+1}},
\]
where $dz$ is the product of the usual additive Haar measures.

One may further parametrize $D_{n,1}$ by $z=x\underline{r}y$, where $x\in U(n)$, $y\in U(1)$, and $\u{r}={}^t(r, 0, \ldots, 0)$ with $-1<r<1$. If we write $r=\tanh t$, $t\in{\bf R}$, then substituting this parametrization into (\ref{c}) yields
\begin{equation}\label{mea}
h_z=k_z a_t k_z^{-1},\quad k_z=\begin{pmatrix} x & 0 \\ 0 & y^*\end{pmatrix}\in K.
\end{equation}

\section{Holomorphic discrete series} \label{s4}

We shall briefly review the treatment in \cite{G}. Recall that $\frak{g}$ is the Lie algebra of $G$, and let $\frak{g}_{\bf C}$ be its complexification. Let
\[
\frak{p}_+=\left\{\begin{pmatrix} 0_n & * \\ 0 & 0\end{pmatrix}\in\frak{g}_{\bf C}\right\},\quad
\frak{p}_-=\left\{\begin{pmatrix} 0_n & 0 \\ * & 0\end{pmatrix}\in\frak{g}_{\bf C}\right\}
\]
and $N_\pm = \exp\frak{p}_\pm$. Then one has the Harish-Chandra decomposition
\[
G\subset N_+\cdot K_{\bf C}\cdot N_-\subset G_{\bf C}.
\]
Let $\pi=\pi_\lambda$ be a holomorphic discrete series with lowest $K$-type $\sigma=\sigma_\Lambda$. In \cite{G} it is shown that the canonical $K$-conjugation invariant matrix coefficient of $\pi$ is given by
\[
\psi_\pi(g)=\psi_\pi (n_+\theta n_-)=\textrm{tr } \sigma(\theta) \in \sigma  \otimes \sigma\subset \pi\otimes \pi^\vee \subset L^2(G)
\]
if $g=n_+\theta n_-$ under the Harish-Chandra decomposition. Here we use the holomorphic extension of $\sigma$ to $K_{\bf C}$. We remark that $\psi_\pi$ is equivalent to the canonical matrix coefficient considered in \cite{F-J, HLS, L1, Liu}.

Recall the Cartan decomposition $g=h_z k$. The Harish-Chandra decomposition
$h_z=n_z^+\theta_z n_z^-$ is
\[
h_z=\begin{pmatrix} 1_n & z \\ 0 & 1\end{pmatrix}\begin{pmatrix} (1_n-zz^*)^{1/2} & 0 \\ 0 &
(1-z^*z)^{-1/2}\end{pmatrix}\begin{pmatrix} 1_n & 0 \\ z^* & 1 \end{pmatrix}.
\]
In particular
\begin{equation}\label{thetaz}
\theta_z=\begin{pmatrix} (1_n-zz^*)^{1/2} & 0 \\ 0 &
(1-z^*z)^{-1/2}\end{pmatrix}.
\end{equation}
Then we have
\begin{equation}\label{dismc}
\psi_\pi(g)=\psi_\pi (n^+_z\theta_z k k^{-1}n^-_z k)=\textrm{tr }\sigma(\theta_z k),
\end{equation}
noting that $K$ normalizes $N_\pm$. Parametrizing $z\in D_{n,1}$ as in Section \ref{s3}, we may write
\[
\theta_z= k_z  \theta_t k_z^{-1},
\]
where $k_z$ is as in (\ref{mea}) and $\theta_t$ is the $K_{\bf C}$-component of $a_t$ under Harish-Chandra decomposition, i.e.
\begin{equation}\label{thetat}
\theta_t= \begin{pmatrix} (\cosh t)^{-1} & 0 & 0 \\ 0 & 1_{n-1} & 0 \\ 0 & 0 & \cosh t\end{pmatrix}.
\end{equation}
Therefore one may further write
\begin{equation}\label{dismc2}
\psi_\pi(g)=\textrm{tr }\sigma(k_z\theta_t k_z^{-1}k )=\textrm{tr }\sigma(\theta_t k_z^{-1}k k_z).
\end{equation}

Finally we remark that by Corollary of \cite[Lemma 23.1]{H-C}, the formal degree $d_\pi$ of a general discrete series $\pi=\pi_\lambda$ is given by
\begin{equation}\label{fd}
d_\pi=C\prod_{1\leq i<j\leq n+1}|\lambda_i-\lambda_j|
\end{equation}
where $C$ is a constant depending on the choice of the Haar measure of $G$.

\section{Fock model} \label{s5}

The smooth model $\omega^\infty_\psi$ of the oscillator representation of $\widetilde{Sp}_{2N}({\bf R})$ can be realized on the Fock space $\mathscr{F}_N$ of entire functions on ${\bf C}^N$ which are square integrable with respect to the hermitian inner product
\[
\langle f, g\rangle_\omega=\int_{{\bf C}^N} f(z)\overline{g(z)} e^{-\pi|z|^2}dz.
\]
The monomials $\left\{\sqrt{\dfrac{\pi^{|\alpha|}}{\alpha!}}z^\alpha, |\alpha|\geq 0\right\}$ forms an orthonormal basis of ${\mathscr F}_N$.
The Harish-Chandra module $\omega^{HC}_\psi$ can be realized as the subspace $\mathscr{P}_N$ of polynomials on ${\bf C}^N$.

Following \cite{F}, introduce the linear map
\begin{equation}\label{gc}
M_{2N}({\bf R})\to M_{2N}({\bf C}),\quad g=\begin{pmatrix} A & B\\ C & D\end{pmatrix}\mapsto g^c=\frac{1}{2}\begin{pmatrix} A+D+i(C-B) &
A-D+i(C+B)\\ A-D-i(C+B) & A+D-i(C-B)\end{pmatrix}.
\end{equation}
Denote by $Sp^c_{2N}$ the image of $Sp_{2N}({\bf R})$. Let $\nu$ be the Fock projective representation of $Sp^c_{2N}$ on ${\mathscr F}_N$. Then for
$g^c= \begin{pmatrix} P & Q\\ \overline{Q} & \overline{P}\end{pmatrix} \in Sp^c_{2N}$,  up to a factor of $\pm 1$ the operator $\nu(g^c)$ is given by
\begin{equation}\label{fockaction}
\left\{
\begin{aligned}
&\nu(g^c)f(z) = \int_{{\bf C}^N}K_{g^c}(z, \bar{w})f(w)e^{-\pi |w|^2} dw,\\
&K_{g^c}(z,\bar{w})= (\det P)^{-\frac{1}{2}} \exp\bigg[\frac{\pi}{2}\big({}^tz\overline{Q}P^{-1}z+2{}^t\bar{w} P^{-1}z -{}^t\bar{w}P^{-1} Q \bar{w}\big)\bigg].
\end{aligned}\right.
\end{equation}
Let $J=1_{n,1}\otimes 1_{p,q}$. We have the embedding
\begin{equation}\label{gembed}
i_G: G\hookrightarrow Sp_{2N}({\bf R}),\quad X+iY\mapsto \begin{pmatrix} X\otimes 1_{n+1} & (Y\otimes 1_{n+1})J\\
-J(Y\otimes 1_{n+1}) & J(X\otimes 1_{n+1})J\end{pmatrix}.
\end{equation}
In spirit of ${\bf C}^N\cong {\bf C}^{n+1}\otimes {\bf C}^{n+1}\cong M_{n+1}({\bf C})$, it is more convenient to label the variables $z_1, \ldots, z_N$ as $z_{11},\ldots, z_{1,n+1}, \ldots, z_{n+1,1},\ldots, z_{n+1,n+1}$.
For instance, if we write the matrix of variables in the block form
\begin{equation}\label{block}
z=(z_{ij})_{i,j=1,\ldots,n+1}=\begin{pmatrix} A_{n\times p} & B_{n\times q} \\
C_{1\times p} & D_{1\times q}\end{pmatrix},
\end{equation}
then from (\ref{gc}-\ref{gembed}), for $k=(x, y)\in K=U(n)\times U(1)$ one has up to $\pm1$
\begin{equation}\label{fockk}
\omega(k) f(z)=(\det x)^{(p-q)/2}(\det y)^{-(p-q)/2}f\begin{pmatrix}
{}^tx A & x^{-1}B \\ y^{-1}C & {}^t y D\end{pmatrix}.
\end{equation}

We need to know the action of $\omega(a_t)$.
We use the notation $\u{z}_i= (z_{i,1},\ldots, z_{i,n+1})$, $i=1,\ldots, n+1$, so that we can write $f(z)=f(\u{z}_1,\ldots, \u{z}_{n+1})\in \mathscr{F}_N$.  As a preliminary step we have

\begin{lemma} \label{4.1}   For $f(z)\in\mathscr{P}_N$,
\begin{align*}
&\omega(a_t)f(z)=(\cosh t)^{-n-1}\exp\left(\pi(\tanh t) \u{z}_1\cdot \u{z}_{n+1}\right) \\
&~ \times \int_{{\bf C}^{n+1}}f(\u{w}_1, \u{z}_2,\ldots, \u{z}_n, (\cosh t)^{-1}\u{z}_{n+1}- (\tanh t) \bar{\u{w}}_1)\exp\left(\pi(\cosh t)^{-1}\u{z}_1\cdot \bar{\u{w}}_1-\pi |\u{w}_1|^2\right)d\u{w}_1.
\end{align*}
\end{lemma}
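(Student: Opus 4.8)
\medskip

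The plan is to compute $\omega(a_t)$ directly from the Fock-model integral formula \eqref{fockaction}, by first determining the matrix $a_t^c$ and then specializing the kernel $K_{g^c}$. First I would apply the embedding $i_G$ of \eqref{gembed} to $a_t = \exp(tH)$. Since $a_t$ is a real matrix (it has $X = a_t$, $Y = 0$), its image in $Sp_{2N}(\mathbf{R})$ is block-diagonal of the form $\mathrm{diag}(a_t \otimes 1_{n+1},\, J(a_t\otimes 1_{n+1})J)$. The only subtlety is that $a_t$ is not orthogonal, so the blocks $P, Q$ of $(i_G(a_t))^c$ will be genuinely nontrivial; a short computation with \eqref{gc} — using $\cosh t = (e^t + e^{-t})/2$, $\sinh t$ similarly, and grouping the $e^{\pm t}$ eigenspaces of $H$ — should yield explicit $P$ and $Q$. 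I expect $P$ and $Q$ to act nontrivially only in the two-dimensional span of the first and last ``rows'' $\u z_1, \u z_{n+1}$ (equivalently, the variables $z_{1,j}$ and $z_{n+1,j}$ for $j = 1,\dots,n+1$), leaving $\u z_2,\dots,\u z_n$ untouched, with $\det P = (\cosh t)^{-(n+1)}$ accounting for the prefactor.

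\medskip

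Next I would plug these $P, Q$ into the kernel $K_{g^c}(z,\bar w)$ of \eqref{fockaction} and carry out the Gaussian integral over all the ``untouched'' variables $w_{i,j}$ with $i \in \{2,\dots,n\}$, which simply reproduces $f$ evaluated at $\u z_2,\dots,\u z_n$ in those slots (the reproducing-kernel property of $\mathscr F$), and over $w_{n+1,j}$. The remaining integral is genuinely over $\u w_1 \in \mathbf{C}^{n+1}$, and the claim is that after these reductions the exponent of $K_{a_t^c}$ collapses to exactly the combination
\[
\pi(\tanh t)\,\u z_1\cdot\u z_{n+1} \;+\; \pi(\cosh t)^{-1}\,\u z_1\cdot\bar{\u w}_1,
\]
while the shift in the last argument of $f$ becomes $(\cosh t)^{-1}\u z_{n+1} - (\tanh t)\bar{\u w}_1$. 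Matching the ${}^t z\,\overline Q P^{-1} z$ term to the $\u z_1\cdot\u z_{n+1}$ coefficient, the ${}^t\bar w P^{-1} z$ term to the cross term, and the ${}^t\bar w P^{-1} Q\bar w$ term to the argument shift of $f$ is the bookkeeping core of the proof; the factor $e^{-\pi|\u w_1|^2}$ is already present in the Fock inner-product measure.

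\medskip

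The main obstacle I anticipate is purely organizational rather than conceptual: keeping the index conventions straight when passing through the chain of identifications $\mathbf{C}^N \cong \mathbf{C}^{n+1}\otimes\mathbf{C}^{n+1}\cong M_{n+1}(\mathbf{C})$ and the block decomposition \eqref{block}, and correctly transporting the $2N\times 2N$ symplectic matrix $i_G(a_t)$ through the map $g\mapsto g^c$ without sign or conjugation errors — in particular verifying the $\pm 1$ ambiguity is resolved consistently (the formula as stated is an equality, so one must check the metaplectic lift sends $a_t$ to the branch with $(\cosh t)^{-(n+1)/2}$ positive, then square appropriately, or simply absorb the sign into the normalization as the surrounding text does). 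A secondary check is that the Gaussian integrals over the suppressed variables genuinely converge and reproduce $f$ exactly, which holds because $P$ restricted to those coordinates is the identity. Once the explicit $P, Q$ are in hand, the rest is a direct substitution, so I would present the computation of $a_t^c$ carefully and then state that \eqref{fockaction} specializes to the asserted formula.
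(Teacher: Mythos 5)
Your proposal follows the paper's proof exactly: compute $i_G(a_t)^c$ via the Cayley transform \eqref{gc}, read off $P_t$ and $Q_t$ (which act only on the first and last rows of the matrix of variables), substitute into the Fock kernel \eqref{fockaction}, and integrate out $\u{w}_2,\ldots,\u{w}_{n+1}$ using the reproducing-kernel identity \eqref{formula}. One small slip worth fixing: it is $(\det P_t)^{-1/2}=(\cosh t)^{-(n+1)}$ (so $\det P_t=(\cosh t)^{2(n+1)}$), not $\det P_t=(\cosh t)^{-(n+1)}$; this does not affect the argument.
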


\begin{proof}
One can show that
\[
i_G(a_t)=\begin{pmatrix} a_t\otimes 1_{n+1} & 0 \\ 0 & a_{-t}\otimes 1_{n+1}\end{pmatrix},\quad i_G(a_t)^c=\begin{pmatrix} P_t & Q_t \\ Q_t & P_t\end{pmatrix}
\]
where
\[
P_t=\begin{pmatrix} \cosh t & 0 & 0 \\ 0 & 1_{n-1} & 0 \\ 0 & 0 & \cosh t\end{pmatrix}\otimes 1_{n+1},\quad Q_t=\begin{pmatrix} 0 & 0 & \sinh t \\ 0 & 0_{n-1} & 0 \\ \sinh t & 0 & 0\end{pmatrix}\otimes 1_{n+1}.
\]
We calculate that
\begin{align*}
&{}^t zQ_t P_t^{-1}z=2(\tanh t) \u{z}_1\cdot\u{z}_{n+1},\quad {}^t\bar{w}P_t^{-1}Q_t\bar{w}=2(\tanh t)\bar{\u{w}}_1\cdot \bar{\u{w}}_{n+1}\\
&{}^t\bar{w}P_t^{-1}z=(\cosh t)^{-1}(\u{z}_1\cdot\bar{\u{w}}_1+\u{z}_{n+1}\cdot\bar{\u{w}}_{n+1})+\sum^n_{i=2} \u{z}_i\cdot \bar{\u{w}}_i.
\end{align*}
The lemma follows from integrating over $\u{w}_2,\ldots, \u{w}_{n+1}$ in (\ref{fockaction}) and applying the following formula, which will be used later as well.
\begin{equation}\label{formula}
\int_{\bf C} z^i\bar{z}^j e^{\pi c \bar{z}-\pi|z|^2}dz=\left\{\begin{array}{ll}
  \dfrac{i!}{(i-j)!}\dfrac{c^{i-j}}{\pi^j}& \textrm{if }i\geq j,\\ 0 & \textrm{if } i<j,
\end{array}\right.
\end{equation}
where $i, j\geq 0$ are integers and $c$ is a constant.
\end{proof}

\section{Joint harmonics} \label{s6}

The notion of joint harmonics was introduced in \cite{Ho}. It is the subspace  $\mathcal{H}\subset \mathscr{P}_N$ annihilated by certain second order differential operators from the the centralizers of $\frak{k}$ and $\frak{k}'$ in $\frak{sp}$, under the action of oscillator representation. We refer the readers to \cite{Ho} for the precise definition.

It is known that $\mathcal{H}$ admits a multiplicity free decomposition
\[
\mathcal{H}\cong \bigoplus \sigma\otimes \sigma'
\]
into irreducible $\widetilde{K}\times \widetilde{K}'$-modules such that $\sigma$ and $\sigma'$ determine each other. Moreover, the lowest $\widetilde{K}$- and $\widetilde{K}'$-type of discrete series correspond to each other under this decomposition.

We consider the subspace of joint harmonics $\mathcal{H}_{\Lambda^\vee,\Lambda'}\cong \sigma_{\Lambda^\vee}\otimes \sigma_{\Lambda'}$  as in Section \ref{s2}. The joint highest weight vector of $\mathcal{H}_{\Lambda^\vee,\Lambda'}$ can be expressed in terms of principal minors. For $i=1,\ldots, n$, let
\[
\Delta_i=\det\begin{pmatrix} z_{11} & \cdots & z_{1i} \\
&\cdots & \\
z_{i1} & \cdots & z_{ii}\end{pmatrix},\quad \Delta_i'=\det\begin{pmatrix}
z_{n-i+1, n-i+2} & \cdots & z_{n-i+1, n+1} \\
& \cdots & \\
 z_{n, n-i+2} & \cdots & z_{n,n+1}\end{pmatrix},
\]
which are determinants of $i\times i$ minors, hence homogeneous polynomials. Then in the two cases of Section \ref{s2}, we have the following harmonic polynomials of joint highest weight, which are unique up to scalar.

Case (i):  We take
\[
\phi(z)=\Delta_1'^{\alpha_1-\alpha_2}\Delta_2'^{\alpha_2-\alpha_3}\cdots \Delta_n'^{\alpha_n}z_{n+1,1}^\gamma.
\]
For any $k\in K$, from (\ref{fockk}) we see that the block $C$ in (\ref{block}), i.e. $z_{n+1,1}$, is the only variable of
$\u{z}_{n+1}$ that appears in $\omega(k)\phi$, but the block $A$, in particular $z_{11}$, does not show up in
$\omega(k)\phi$. This observation together with Lemma \ref{4.1} and (\ref{formula}) gives us
\begin{align*}
\omega(a_t k)\phi(z)&=(\cosh t)^{-n-1}\exp\left(\pi(\tanh t) \u{z}_1\cdot \u{z}_{n+1}\right) \omega(k)\phi\left((\cosh t)^{-1}\u{z}_1,\u{z}_2,\ldots, \u{z}_n,
(\cosh t)^{-1}\u{z}_{n+1}\right)\\
&=(\cosh t)^{-n-1}\exp\left(\pi(\tanh t) \u{z}_1\cdot \u{z}_{n+1}\right) \sigma_{\Lambda^\vee}(b_t k)\phi,
\end{align*}
where we use the extension of $\sigma_{\Lambda^\vee}$ to $K_{\bf C}$, and
\begin{equation}\label{bt}
b_t=\begin{pmatrix} \cosh t & 0 & 0 \\ 0 & 1_{n-1} & 0 \\ 0 & 0 & \cosh t\end{pmatrix}.
\end{equation}
Since $\sigma_{\Lambda^\vee}$-action preserves the degree, and monomials are orthogonal basis, we may use Taylor expansion to drop the factor $\exp\left(\pi(\tanh t) \u{z}_1\cdot \u{z}_{n+1}\right)$ and obtain
\begin{equation}\label{wmc}
\langle\omega(k'a_tk)\phi, \phi\rangle=(\cosh t)^{-n-1}\langle \sigma_{\Lambda^\vee}(k' b_t k)\phi, \phi\rangle
\end{equation}
for any $k, k'\in K$.

Case (ii): We take
\[
\phi(z)=\Delta_1^{\beta_1-\beta_2}\Delta_2^{\beta_2-\beta_3}\cdots \Delta_p^{\beta_p}\Delta_1'^{\alpha_1-\alpha_2}\Delta_2'^{\alpha_2-\alpha_3}\cdots\Delta_{q-1}'^{\alpha_{q-1}}z_{n+1,p+1}^\gamma.
\]
The argument is similar to above. We note that $z_{n+1,p+1}$ is the only variable of $\u{z}_{n+1}$ that appears in $\omega(k)\phi$, while the first $p$ rows of the block $B$ in
(\ref{block}), in particular
$\u{z}_{1,p+1}$, do not show up. The same argument as above gives us
\begin{equation}\label{wmc2}
\langle\omega(k'a_tk)\phi, \phi\rangle=(\cosh t)^{-n-1}\langle \sigma_{\Lambda^\vee}(k' b_t^{-1} k)\phi, \phi\rangle
\end{equation}

We may summarize our results as

\begin{prop}\label{6.1}
Under the assumptions of Section \ref{s2}, for a vector $\phi\in\mathcal{H}_{\Lambda^\vee,\Lambda'}$ of joint highest weight, $k, k'\in K$, one has
\[
\langle\omega(k'a_tk)\phi, \phi\rangle=(\cosh t)^{-n-1}\langle \sigma_{\Lambda^\vee}(k' b_t^{\pm 1} k)\phi, \phi\rangle,
\]
where the $\pm$ sign depends on whether the first (or equivalently, the last) component of $\Lambda^\vee$ is negative or positive.
\end{prop}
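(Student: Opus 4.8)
The plan is to combine the two identities (\ref{wmc}) and (\ref{wmc2}), established for the joint highest weight vector $\phi$ in the two cases of Section \ref{s2}: Proposition \ref{6.1} is precisely their common form. Under the standing assumptions the parameter $\Lambda^\vee$ lies in exactly one of Case (i) ($b=0$) or Case (ii) ($b=1$); in Case (i) one has (\ref{wmc}), the $b_t^{+1}$ branch, and in Case (ii) one has (\ref{wmc2}), the $b_t^{-1}$ branch. It then remains only to match this dichotomy with the sign in the statement: Case (i) forces $p=1$, so that $\Lambda^\vee$ has the shape (\ref{w1}), whose first component is $-\alpha_n<0$ and whose last component is $m=-\gamma\le 0$, whereas Case (ii) is precisely the case $m=\gamma>0$. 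Thus the proof reduces to (\ref{wmc}) and (\ref{wmc2}).

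For completeness I outline the mechanism behind those two identities, which is where the real work is. The joint highest weight vector $\phi$ is a product of the corner minors $\Delta_i$, $\Delta_i'$ times a power of a single last-row entry $z_{n+1,j}$, with $j=1$ in Case (i) and $j=p+1$ in Case (ii). From (\ref{fockk}) and the shapes of these minors one checks that $\omega(k)\phi$ depends on $\u{z}_{n+1}$ only through $z_{n+1,j}$ and is never holomorphic in the entry $z_{1,j}$. Substituting $\omega(k)\phi$ into Lemma \ref{4.1} and integrating over $\u{w}_1$ with (\ref{formula}): every holomorphic variable $w_{1,k}$ is replaced by $(\cosh t)^{-1}z_{1,k}$, while the shift $-(\tanh t)\bar{\u{w}}_1$ in the last argument is harmless, since it occurs only in the $j$-th coordinate, multiplying $z_{n+1,j}$, and $\omega(k)\phi$ being non-holomorphic in $w_{1,j}$, the case $i=0$ of (\ref{formula}) annihilates every term of positive degree in $\bar{w}_{1,j}$. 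Thus $\omega(a_tk)\phi$ equals $(\cosh t)^{-n-1}\exp(\pi(\tanh t)\,\u{z}_1\cdot\u{z}_{n+1})$ times the function obtained from $\omega(k)\phi$ by rescaling its first and last rows by $(\cosh t)^{-1}$. Pairing with $\phi$, the exponential contributes only terms of degree strictly larger than that of $\phi$, hence orthogonal to $\phi$ because distinct homogeneous pieces of the Fock space are orthogonal; an extra $k'\in K$ on the left is handled in the same way, (\ref{fockk}) being degree preserving.

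The genuinely delicate point — and the main obstacle — is to recognise the pairing of this rescaled function against $\phi$ as $\langle\sigma_{\Lambda^\vee}(b_t^{\pm1}k)\phi,\phi\rangle$. By (\ref{fockk}) applied to (\ref{bt}), the operator $\sigma_{\Lambda^\vee}(b_t^{\pm1})$ rescales the first row of the $A$-block by $(\cosh t)^{\pm1}$, the first row of the $B$-block by $(\cosh t)^{\mp1}$, the $C$-block by $(\cosh t)^{\mp1}$ and the $D$-block by $(\cosh t)^{\pm1}$, which is \emph{not} literally the uniform rescaling of rows $1$ and $n+1$ found above. However, on the variables that $\omega(k)\phi$ actually involves the two rescalings do agree, with the correct choice of sign, the discrepancy being carried entirely by monomials supported in the blocks to which $\phi$ is insensitive, hence orthogonal to $\phi$. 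Carrying out this comparison gives the sign: in Case (i) one needs $b_t^{+1}$, because there $\omega(k)\phi$ depends only on the $B$- and $C$-blocks, on which $b_t$ acts by $(\cosh t)^{-1}$; in Case (ii) one needs $b_t^{-1}$, because there $\omega(k)\phi$ feels the first row of the $A$-block and the coordinate $z_{n+1,p+1}$, both rescaled by $(\cosh t)^{-1}$ under $b_t^{-1}$, while being blind to the first row of the $B$-block, where the two rescalings differ. Pinning down which of $b_t^{\pm1}$ works — that is, which blocks $\phi$ ignores in each case — is exactly the bookkeeping that separates Case (i) from Case (ii) and yields the sign in the statement.
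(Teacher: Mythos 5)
Your proposal matches the paper's approach: Proposition \ref{6.1} is a restatement of (\ref{wmc}) and (\ref{wmc2}), and the mechanism you outline — Lemma \ref{4.1} together with (\ref{formula}) to produce the row-rescaling, dropping the exponential factor by degree considerations, and comparing the rescaling against $\sigma_{\Lambda^\vee}(b_t^{\pm1})$ — is exactly the paper's argument. One clause in your final sentence is slightly off: in Case (ii), $\omega(k)\phi$ for generic $k\in K$ is \emph{not} blind to the first row of the $B$-block (only to the single entry $z_{1,p+1}$), since $x^{-1}B$ mixes all rows of $B$ into the rows $p+1,\ldots,n$ that $\phi$ sees; what makes the mismatch on that row (and on the $C$-block) harmless is that $\phi$ itself has degree $0$ there, so the discrepant monomials are orthogonal to $\phi$ — precisely the observation you make correctly one sentence earlier. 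The sign-matching and the reduction to (\ref{wmc}), (\ref{wmc2}) are otherwise fine.
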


In particular, by the Harish-Chandra decomposition $g=h_zk=k_z a_t k_z^{-1}k$, one has
\[
\langle\omega(g)\phi,\phi\rangle=(\cosh t)^{-n-1}\langle\sigma_{\Lambda^\vee} (k_zb_t^{\pm1} k_z^{-1}k)\phi,\phi\rangle.
\]
Define
\begin{equation}
b_z= k_z b_t k_z^{-1}=\begin{pmatrix} (1_n-zz^*)^{-1/2} & 0 \\ 0 &
(1-z^*z)^{-1/2}\end{pmatrix}
\end{equation}
so that
\begin{equation}\label{wmc3}
\langle\omega(g)\phi,\phi\rangle=(\cosh t)^{-n-1}\langle \sigma_{\Lambda^\vee}(b_z^{\pm 1} k)\phi,\phi\rangle.
\end{equation}

\section{Zeta integrals} \label{s7}

We are ready to compute the archimedean zeta integrals on $U(n,1)$ that involves oscillator representations and holomorphic discrete series, combining the results in the previous sections.

In terms of the Harish-Chandra decomposition, by (\ref{dismc}) and (\ref{wmc3}) we have
\begin{align*}
\int_G \langle \omega(g)\phi,\phi\rangle \cdot \psi_\pi(g)dg&=\int_C \int_K
\langle \omega(h_zk)\phi,\phi\rangle\cdot\psi_\pi(h_zk)dk d^*z\\
&= \int_C\int_K  \langle \sigma_{\Lambda^\vee}(b_z^{\pm1}k)\phi, \phi\rangle\cdot
\textrm{tr }\sigma_\Lambda(\theta_z k) \det(1_n-zz^*)^{(n+1)/2}dk d^*z,
\end{align*}
noting that $\det(1_n-zz^*)=(\cosh t)^{-2}$.
We shall follow the strategy in \cite{G} to evaluate above integral, or more generally the integral
\begin{equation}\label{is}
I^\pm_s=\int_C\int_K  \langle \sigma_{\Lambda^\vee}(b_z^{\pm1}k)\phi, \phi\rangle\cdot
\textrm{tr }\sigma_\Lambda(\theta_z k) \det(1_n-zz^*)^s dk d^*z
\end{equation}
which converges absolutely for $\textrm{Re }s\gg0$. Here the $\pm$ sign is determined by $\Lambda^\vee$ as in Proposition \ref{6.1}, i.e. depends on whether we have Case (i) or (ii).

Our main result is the following

\begin{theorem} \label{main} Under the assumptions and notations of Sections \ref{s2} and \ref{s3}, for $\phi\in \mathcal{H}_{\Lambda^\vee, \Lambda'}$ and $\pi=\pi_\lambda$ one has the zeta integral

Case (i):
\[
\int_G \langle \omega(g)\phi,\phi\rangle \cdot \psi_\pi(g)dg=\frac{\pi^n}{\dim\sigma_\Lambda}\prod^n_{i=1}\frac{1}{\alpha_i-i+n}\|\phi\|^2;
\]

Case (ii):
\[
\int_G \langle \omega(g)\phi,\phi\rangle \cdot \psi_\pi(g)dg=\frac{\pi^n}{\dim \sigma_\Lambda}\prod^n_{i=1}\frac{1}{\gamma+i-p}\|\phi\|^2,\]
where $\dim\sigma_\Lambda$ is given by the well-known Weyl formula (\ref{dim}).
\end{theorem}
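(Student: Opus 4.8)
The plan is to reduce the zeta integral to the family $I^{\pm}_s$ defined in \eqref{is} and then evaluate $I^{\pm}_s$ by integrating out the $K$-variable first, using Schur orthogonality to collapse the trace against the matrix coefficient of $\sigma_{\Lambda^\vee}$. Concretely, in the $dk$-integral the integrand is $\langle \sigma_{\Lambda^\vee}(b_z^{\pm1}k)\phi,\phi\rangle \cdot \mathrm{tr}\,\sigma_\Lambda(\theta_z k)$. Writing $\mathrm{tr}\,\sigma_\Lambda(\theta_z k) = \sum_v \langle \sigma_\Lambda(\theta_z k) v, v\rangle$ over an orthonormal basis of $\sigma_\Lambda$, and recalling that $\sigma_\Lambda$ is the contragredient of $\sigma_{\Lambda^\vee}$, the Schur orthogonality relations on $K$ (applied to the compact group $K = U(n)\times U(1)$, with the normalization of measure fixed in Section~\ref{s3}) turn $\int_K \langle \sigma_{\Lambda^\vee}(b_z^{\pm1}k)\phi,\phi\rangle \langle \sigma_\Lambda(\theta_z k)v,v\rangle\,dk$ into $\frac{1}{\dim\sigma_\Lambda}\langle \sigma_{\Lambda^\vee}(b_z^{\pm1})\phi, \sigma_{\Lambda^\vee}(\theta_z)^{*}(\text{something})\rangle$-type expression; after summing over $v$ one is left with $\frac{1}{\dim\sigma_\Lambda}\langle \sigma_{\Lambda^\vee}(\theta_z^{-1}b_z^{\pm1})\phi,\phi\rangle$, using that $\theta_z = \mathrm{diag}((1_n-zz^*)^{1/2}, (1-z^*z)^{-1/2})$ and $b_z = \mathrm{diag}((1_n-zz^*)^{-1/2},(1-z^*z)^{-1/2})$ are simultaneously diagonalizable by $k_z$. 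This is exactly the device used in \cite{G}, so I expect it to go through cleanly.

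Next I would compute $\theta_z^{-1}b_z^{\pm1}$ explicitly. In Case (i) (the $+$ sign), $\theta_z^{-1}b_z = \mathrm{diag}((1_n-zz^*)^{-1}, 1)$, and in Case (ii) (the $-$ sign), $\theta_z^{-1}b_z^{-1} = \mathrm{diag}(1_n, (1-z^*z))$. So in each case the remaining $z$-integral becomes $\frac{1}{\dim\sigma_\Lambda}\int_C \langle \sigma_{\Lambda^\vee}(\mathrm{diag}((1_n-zz^*)^{\mp1},(\cdot)))\phi,\phi\rangle\det(1_n-zz^*)^s\,d^*z$. Using the parametrization $z = x\,\u{r}\,y$ from Section~\ref{s3} together with $\det(1_n-zz^*) = (\cosh t)^{-2}$ and the explicit form of $\sigma_{\Lambda^\vee}$ restricted to the diagonal torus of $K_{\bf C}$ acting on the highest-weight-type vector $\phi$ (which is built from the principal minors $\Delta_i$, $\Delta_i'$ and $z_{n+1,1}$ or $z_{n+1,p+1}$ in Section~\ref{s6}), this matrix coefficient is a monomial in $r$ (equivalently $\cosh t$), times $\|\phi\|^2$. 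The integral over $x\in U(n)$, $y\in U(1)$ contributes only normalization constants (mass $1$ each), and the radial integral over $-1<r<1$ — or over $t\in{\bf R}$ after $r=\tanh t$ — is a Beta-type integral. Carefully tracking the Jacobian of $z \mapsto (x, r, y)$ relative to $d^*z = dz/\det(1_n-zz^*)^{n+1}$ is where I would be most careful; the power of $(1-r^2)$ appearing there, combined with the exponent from $\sigma_{\Lambda^\vee}$ and the $\det(1_n-zz^*)^s$, must assemble into a convergent Beta integral for $\mathrm{Re}\,s\gg0$.

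The resulting meromorphic function of $s$ should be a product of Gamma factors; evaluating at $s = (n+1)/2$ (the value forced by $\det(1_n-zz^*)^{(n+1)/2}$ in the zeta integral) and simplifying via $\Gamma(x+1)=x\Gamma(x)$ should collapse the answer to the finite product $\prod_{i=1}^n \frac{1}{\alpha_i - i + n}$ in Case (i) and $\prod_{i=1}^n\frac{1}{\gamma+i-p}$ in Case (ii), times $\frac{\pi^n}{\dim\sigma_\Lambda}\|\phi\|^2$. The factor $\pi^n$ presumably arises from the normalization of the Fock inner product $\langle\cdot,\cdot\rangle_\omega$ and the formula \eqref{formula}, which carries powers of $\pi^{-1}$; I would pin it down by tracking the $\pi$-powers through the radial integral. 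The main obstacle I anticipate is not any single step but the \emph{bookkeeping}: correctly matching the weight $\Lambda^\vee$ in the two cases of Section~\ref{s2} (with the $\det$-twists and the distinction $b=0$ versus $b=1$) against the explicit action of the diagonal matrix $(1_n-zz^*)^{\mp1}$ on $\phi$, and making sure the exponents $\alpha_i$ and $\gamma$, shifted by the Weyl-vector terms $-i+n$ and $i-p$, come out with the right signs. A secondary subtlety is justifying the interchange of $\int_C$ and $\int_K$ and the use of Schur orthogonality inside an absolutely convergent (for $\mathrm{Re}\,s\gg0$) integral, followed by analytic continuation to $s=(n+1)/2$; this is routine but must be stated. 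Once $I^{\pm}_s$ is in closed form the theorem follows by specialization.
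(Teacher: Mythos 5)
Your reduction of the zeta integral to $I^{\pm}_s$ and the use of Schur orthogonality to collapse the $K$-integral into $\frac{1}{\dim\sigma_\Lambda}\langle\sigma_{\Lambda^\vee}(\theta_z^{-1}b_z^{\pm1})\phi,\phi\rangle$ matches the paper's proof step for step, as does the observation that $\theta_z^{-1}b_z$ and $\theta_z^{-1}b_z^{-1}$ are block-diagonal so that only $\sigma_1$ or $\sigma_2$ enters. The paper then also invokes Schur's lemma, via a change of variables, to conclude that the resulting operator $T^{\pm}_s$ is a scalar. Up to this point your proposal and the paper coincide.

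Where you diverge is in the evaluation of that scalar. The paper deliberately does \emph{not} attempt a bare-hands radial computation: it delegates the integral $\int_{D_{p,q}}\sigma_1^{-1}(1_p-zz^*)\otimes\sigma_2(1_q-z^*z)\det(1_p-zz^*)^s\,d^*z$ to Lemma~\ref{gar}, which is a corrected version of a proposition of Garrett, and then simply specializes $(p,q)=(n,1)$ with one tensor factor trivial, plugs in the weights from \eqref{w1}, \eqref{w2}, and sets $s=(n+1)/2$. You propose instead to parametrize $z=x\u{r}y$, argue that the $U(n)\times U(1)$ integral contributes only the mass-one normalization, and reduce to a single Beta integral on the grounds that ``the matrix coefficient is a monomial in $r$.''

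This is where there is a genuine gap, and it is concentrated in Case~(i). For Case~(ii) the relevant factor is $\sigma_2(1-z^*z)$ with $\sigma_2$ a character of $U(1)$, so the integrand really is $(1-r^2)^\gamma$ up to determinant twists, the $K$-integral truly is trivial, and your Beta-integral evaluation goes through (this is in fact how the authors computed $T^-_s$ in a draft version). But in Case~(i) the relevant factor is $\sigma_1^{-1}(1_n-zz^*)$ with $\sigma_1$ a \emph{non-one-dimensional} irreducible representation of $U(n)$ (its highest weight is $(-\alpha_n,\dots,-\alpha_1)+\det^{(1-n)/2}$). Writing $z=x\u{r}y$ gives $1_n-zz^*=x\,\mathrm{diag}(1-r^2,1,\dots,1)\,x^{*}$, so $\sigma_1^{-1}(1_n-zz^*)=\sigma_1(x)\,\sigma_1^{-1}(\mathrm{diag}(1-r^2,1,\dots,1))\,\sigma_1(x)^{-1}$ and the $x$-dependence does \emph{not} disappear from the matrix coefficient $\langle\sigma_1^{-1}(1_n-zz^*)\phi,\phi\rangle$ against the highest-weight vector $\phi$; it is a monomial in $r$ only along the slice $x=1$. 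The correct reduction is to take the trace first (legitimate, since $T^+_s$ is already known to be scalar), which produces the character $\chi_{\sigma_1}\bigl((1-r^2)^{-1},1,\dots,1\bigr)$ — a Schur polynomial in one ``active'' variable, not a monomial — and one must then carry out the resulting sum of Beta integrals and simplify the product of Gamma factors. This is precisely the nontrivial content of Lemma~\ref{gar}(i), and it is what produces the shifted product $\prod_{i=1}^n\frac{1}{\alpha_i-i+s-(1-n)/2}$ rather than a single Beta value. You would also need to track the polar Jacobian $r^{2n-1}$ carefully (your ``normalization constants only'' dismissal is too quick), and you would need to know the corrected form of Garrett's proposition, since the paper notes that the published version of \cite{G} contains an error at exactly this step. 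In short: the skeleton of your argument is right and identical to the paper's through the identification of $T^\pm_s$, but the decisive evaluation of $T^+_s$ in Case~(i) cannot be a single Beta integral against a monomial, and filling that hole amounts to reproving Lemma~\ref{gar}(i).
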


\begin{proof} Since $\sigma_{\Lambda^\vee}(b_z)^*=\sigma_{\Lambda^\vee}(b_z^*)=\sigma_{\Lambda^\vee}(b_z)$, $\sigma_\Lambda(\theta_z)^*=\sigma_\Lambda(\theta_z^*)=\sigma_\Lambda(\theta_z)$, we have
\begin{align*}
 \langle \sigma_{\Lambda^\vee}(b_z^{\pm1}k)\phi, \phi\rangle\cdot
\textrm{tr }\sigma_\Lambda(\theta_z k) &= \sum_i \langle \sigma_{\Lambda^\vee}(k)\phi,
\sigma_{\Lambda^\vee}(b_z^{\pm 1})\phi\rangle \cdot \langle \sigma_{\Lambda}(k)x_i,
\sigma_\Lambda(\theta_z)x_i\rangle
\end{align*}
where $\{x_i\}$ is an orthonormal basis of $\sigma_\Lambda$.
By Schur orthogonality relation, the integration over $K$ leaves us
\begin{align*}
I^\pm_s&=\frac{1}{\dim\sigma_\Lambda}\int_C\sum_i \langle\phi, x_i\rangle
\cdot \overline{\langle \sigma_{\Lambda^\vee}( b_z^{\pm 1})\phi, \sigma_\Lambda(\theta_z)x_i\rangle} \det(1_n-zz^*)^s d^*z\\
&=\frac{1}{\dim\sigma_\Lambda}\int_C\sum_i \langle\phi, x_i\rangle
\cdot \overline{\langle \sigma_{\Lambda^\vee}( \theta_z^{-1}b_z^{\pm 1})\phi, x_i\rangle} \det(1_n-zz^*)^s d^*z\\
&=\frac{1}{\dim\sigma_\Lambda}\int_C\langle\phi, \sigma_{\Lambda^\vee}(\theta_z^{-1} b_z^{\pm 1})\phi\rangle \det(1_n-zz^*)^s d^*z\\
&=\frac{1}{\dim\sigma_\Lambda}\langle\phi, (\int_C\sigma_{\Lambda^\vee}(\theta_z^{-1} b_z^{\pm 1}) \det(1_n-zz^*)^s d^*z)\phi\rangle.
\end{align*}
Hence we need to compute the endomorphism
\[
T^\pm_s=\int_C\sigma_{\Lambda^\vee}(\theta_z^{-1} b_z^{\pm 1}) \det(1_n-zz^*)^s d^*z\in\textrm{End}_{\bf C}(\sigma_{\Lambda^\vee}).
\]
We find that
\[
\theta_z^{-1} b_z=\begin{pmatrix} (1_n-zz^*)^{-1} & 0 \\  0 & 1\end{pmatrix},\quad \theta_z^{-1}b_z^{-1}=\begin{pmatrix} 1 & 0 \\ 0 & 1-z^*z\end{pmatrix}.
\]
If we decompose $\sigma_{\Lambda^\vee}\cong \sigma_1\otimes \sigma_2$ as the outer tensor product of irreducibles $\sigma_1$ of $\widetilde{U}(n)$ and $\sigma_2$ of $\widetilde{U}(1)$, then
\[
\left\{
\begin{aligned}
&T^+_s=\int_C \sigma^{-1}_1(1_n-zz^*)\det(1_n-zz^*)^s d^*z\in \textrm{End}_{\bf C}(\sigma_1),
\\
& T^-_s=\int_C \sigma_2(1-z^*z)\det(1_n-zz^*)^s d^*z\in \textrm{End}_{\bf C}(\sigma_2).
\end{aligned}
\right.
\]
By the parametrization of $D_{n,1}$ in Section \ref{s3}, a change of variables in the defining integral shows that $T_s^+$ commutes with $\sigma_1(k)$ for any $k\in \widetilde{U}(n)$, hence must be a scalar thanks to Schur's lemma. Similarly $T_s^-$ is a scalar as well. In other words, we have
\[
I_s^\pm=\frac{\overline{T_s^\pm}}{\dim\sigma_\Lambda}\|\phi\|^2=\frac{T_s^\pm}{\dim\sigma_\Lambda}\|\phi\|^2,
\]
noting that $T_s^\pm$ is real since the integrand is real. Recall the Weyl dimension formula
\begin{align}\label{dim}
\dim \sigma_\Lambda&=\dim \sigma_1 = \prod_{\alpha\in \Delta_c^+} \frac{\langle\Lambda+\rho_c, \alpha\rangle}{\langle\rho_c,\alpha\rangle}=\prod_{\alpha\in\Delta^+_c}\frac{\langle \lambda+\rho-\rho_c,\alpha\rangle}{\langle\rho_c,\alpha\rangle}\\
&=\prod_{\alpha\in\Delta^+_c}\frac{\langle\lambda,\alpha\rangle}{\langle\rho_c,\alpha\rangle}=\prod_{1\leq i<j\leq n} \frac{\lambda_i-\lambda_j}{j-i}.\nonumber
\end{align}

It remains to find the scalar $T_s^\pm$, which is essentially a special case of the computation in \cite{G}. However the second proposition in \cite[\S3]{G} was not stated correctly, which caused a mistake in the formulas of the main theorem therein. For reader's convenience a correct variant form is given in the Lemma \ref{gar} below.

Applying this lemma for the representations $\sigma_1\otimes 1$ and $1\otimes \sigma_2$  of $(U(n)\times U(1))^\sim$ respectively, we obtain

Case (i): $\Lambda^\vee=[(-\alpha_n,\ldots,-\alpha_1)+\textrm{det}^{(1-n)/2}]
\otimes [-\gamma+\textrm{det}^{-(1-n)/2}]$,
\[
T_s^+=S_{\sigma_1\otimes 1, s}=\pi^n\prod^n_{i=1}\frac{1}{\alpha_i-i+s-(1-n)/2};
\]

Case (ii): $\Lambda^\vee=[(\beta_1,\ldots,\beta_p, -\alpha_{q-1},\ldots,-\alpha_1)+\textrm{det}^{(p-q)/2}]
\otimes [\gamma+\textrm{det}^{-(p-q)/2}]$,
\[
T_s^-=S_{1\otimes \sigma_2, s}=\pi^n\prod^n_{i=1}\frac{1}{\gamma-i+s-(p-q)/2}.
\]
The theorem follows from specializing $s=(n+1)/2$.
\end{proof}

\begin{lemma} \label{gar} \cite{G}
Let $\sigma=\sigma_1\otimes\sigma_2$ be an irreducible representation of $(U(p)\times U(q))^\sim$, where $\sigma_1$ and $\sigma_2$ have highest weights $(\kappa_1,\ldots, \kappa_p)$ and $(\iota_{1},\ldots, \iota_{q})$ respectively. Define
\[
S_{\sigma,s}=\int_{D_{p,q}}\sigma_1^{-1}(1_p-zz^*)\otimes \sigma_2(1_q-z^*z)\det(1_p-zz^*)^s d^*z\in\textrm{End}_{\bf C}(\sigma)
\]
for $\textrm{Re }s\gg 0$, where
\[
D_{p,q}=\{z\in{\bf C}^{p\times q}: 1_p-zz^*\textrm{ is positive definite}\},\quad d^*z=\frac{dz}{\det(1_p-zz^*)^{p+q}}.
\]
Then $S_{\sigma,s}$ is a scalar and one has

(i) if $\sigma_2=\det^\iota$ is one-dimensional, then
\begin{align*}
S_{\sigma,s}&= \pi^{pq}\prod^p_{i=1}\frac{\Gamma(\iota-\kappa_i-(p+q-i)+s)}{\Gamma(\iota-\kappa_i-(p-i)+s)}\\
&=\pi^{pq}\prod^p_{i=1}\frac{1}{(\iota-\kappa_i-(p+q-i)+s)\cdots(\iota-\kappa_i-(p+1-i)+s)};
\end{align*}

(ii) if $\sigma_1=\det^\kappa$ is one-dimensional, then
\begin{align*}
S_{\sigma,s}&= \pi^{pq}\prod^q_{i=1}\frac{\Gamma(\iota_i-\kappa-(p+i-1)+s)}{\Gamma(\iota_i-\kappa-(i-1)+s)}\\
&=\pi^{pq}\prod^q_{i=1}\frac{1}{(\iota_i-\kappa-(p+i-1)+s)\cdots(\iota_i-\kappa-i+s)};
\end{align*}

(iii) if $\sigma$ is the lowest $\widetilde{K}$-type of an anti-holomorphic discrete series $\pi$ of $\widetilde{U}(p,q)$, then under above measure the formal degree of $\pi$ is given by
\[
\frac{1}{d_\pi} =\frac{S_{\sigma,0}}{\dim\sigma}.
\]
\end{lemma}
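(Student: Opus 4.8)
The plan is first to reduce the endomorphism-valued integral to a scalar by symmetry, then in each of the two one-dimensional cases to reduce the computation of that scalar to integrating a monomial in principal minors over $D_{p,q}$, and finally to evaluate this last integral by stripping off one row (or column) of $z$ at a time, each step contributing exactly one Gamma factor. For the first step, observe that for $(k_1,k_2)\in U(p)\times U(q)$ the substitution $z\mapsto k_1zk_2^{-1}$ preserves $D_{p,q}$ and the measure $d^*z$, carries $1_p-zz^*$ to $k_1(1_p-zz^*)k_1^*$ and $1_q-z^*z$ to $k_2^*(1_q-z^*z)k_2$, and leaves $\det(1_p-zz^*)$ unchanged; hence $S_{\sigma,s}$ commutes with every $\sigma_1(k_1)\otimes\sigma_2(k_2)$, and since $\sigma_1\boxtimes\sigma_2$ is irreducible $S_{\sigma,s}$ is a scalar by Schur's lemma. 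I would also use throughout the identity $\det(1_p-zz^*)=\det(1_q-z^*z)$.

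Suppose first $\sigma_2=\det^\iota$, which is case (i). Then $\sigma_2(1_q-z^*z)=\det(1_p-zz^*)^{\iota}$, so $S_{\sigma,s}=\int_{D_{p,q}}\sigma_1^{-1}(1_p-zz^*)\det(1_p-zz^*)^{s+\iota}\,d^*z\in\textrm{End}_{\bf C}(\sigma_1)$, and I would pin down the scalar by pairing against a highest weight vector $v$ of $\sigma_1$, using the holomorphic extension of $\sigma_1$ to $GL(p,{\bf C})$. Writing a positive-definite Hermitian matrix $H$ in Cholesky form $H=\ell\,d\,\ell^*$ with $\ell$ lower-unitriangular and $d=\mathrm{diag}(d_1,\ldots,d_p)$ positive, the relations $\sigma_1(\ell^*)v=v$ and $\sigma_1(\ell)^*=\sigma_1(\ell^*)$ give $\langle\sigma_1(H)v,v\rangle=d^{\kappa}\|v\|^2$ with $d_i=\Delta_i(H)/\Delta_{i-1}(H)$ a ratio of leading principal minors of $H$; applying this to $H=(1_p-zz^*)^{-1}$ and invoking Jacobi's minor identity to rewrite the minors of $(1_p-zz^*)^{-1}$ through those of $1_p-zz^*$, one finds that $\langle\sigma_1^{-1}(1_p-zz^*)v,v\rangle/\|v\|^2$ is an explicit monomial in the principal minors of $1_p-zz^*$ whose exponents are the consecutive differences $\kappa_i-\kappa_{i+1}$ together with a power of $\det(1_p-zz^*)$ depending on $s$, $\iota$ and $\kappa_p$. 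The scalar therefore equals the integral of this monomial over $D_{p,q}$ against $d^*z$.

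It remains to evaluate this matrix beta integral, which is essentially the computation in \cite{G}; I would carry it out by induction on $p$. Fixing the first $p-1$ rows $z'$ of $z$, the condition $z\in D_{p,q}$ becomes $z'\in D_{p-1,q}$ together with $\zeta\bigl(1_q-(z')^*z'\bigr)^{-1}\zeta^*<1$ for the last row $\zeta$; cofactor-expanding $\det(1_p-zz^*)$ along the last row and integrating $\zeta$ over this ellipsoid produces exactly one Gamma quotient and returns an integral of the same shape over $D_{p-1,q}$, with exponents shifted by $1$. Iterating $p$ times and simplifying yields formula (i). Case (ii), where $\sigma_1=\det^\kappa$, is handled in the same way after writing $S_{\sigma,s}=\int_{D_{p,q}}\sigma_2(1_q-z^*z)\det(1_p-zz^*)^{s-\kappa}\,d^*z\in\textrm{End}_{\bf C}(\sigma_2)$, pairing against a highest weight vector of $\sigma_2$, and using $\Delta_j(1_q-z^*z)=\det\bigl(1_p-z_{(j)}z_{(j)}^*\bigr)$ for $z_{(j)}$ the submatrix of the first $j$ columns of $z$; here one inducts on $q$. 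Finally, part (iii) is the standard formal-degree computation: by Schur orthogonality for the (holomorphic) discrete series $\pi^\vee$ one has $\int_{U(p,q)}|\psi_{\pi^\vee}(g)|^2\,dg=\dim\sigma/d_\pi$, and evaluating this integral through the Cartan decomposition $g=h_zk$ and Schur orthogonality over $\widetilde{K}$ --- exactly as in Section \ref{s7} --- identifies it with the scalar $S_{\sigma,0}$.

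The main obstacle is the bookkeeping in the last two steps: pinning down precisely which monomial in the principal minors represents $\langle\sigma_1^{-1}(1_p-zz^*)v,v\rangle$ --- this is exactly where the corresponding statement in \cite{G} was inaccurate --- and then tracking the shifts of the Gamma arguments through the inductive integration, so that the telescoping over $i$ collapses to the stated quotients $\Gamma(\iota-\kappa_i-(p+q-i)+s)/\Gamma(\iota-\kappa_i-(p-i)+s)$ in case (i), and the analogous ones in case (ii).
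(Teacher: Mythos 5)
The paper itself does not prove this lemma: it is stated as a cited result from \cite{G}, with the remark preceding it explaining that it is a \emph{corrected} reformulation of the second proposition in \cite[\S 3]{G} (whose original form contained an error that propagated into Garrett's main theorem). So there is no in-paper proof against which your argument could be compared, and your proposal is in effect supplying a proof from scratch.

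As a proof outline it is structurally sound and very close to what one would expect Garrett's own argument to be. The Schur's-lemma reduction to a scalar (via the substitution $z\mapsto k_1 z k_2^{-1}$) is the same mechanism the paper uses for $T_s^{\pm}$ in the proof of Theorem~\ref{main}. The highest-weight pairing via the Cholesky factorization $H=\ell d\ell^*$ with $\ell$ lower-unitriangular is correct: $\sigma_1(\ell^*)v=v$ because $\ell^*$ is upper-unitriangular, $\sigma_1(\ell)^*=\sigma_1(\ell^*)$ because the holomorphic extension of a unitary representation satisfies $\sigma(g)^*=\sigma(g^*)$, and the pivots $d_i$ are ratios of leading principal minors of $H=(1_p-zz^*)^{-1}$. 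Note that Jacobi's identity turns these into \emph{trailing} principal minors of $1_p-zz^*$ (divided by $\det$), so the row-stripping induction should peel off the \emph{first} row of $z$, not the last; this is harmless but is exactly the sort of indexing that must be tracked. The identity $\det(1_p-zz^*)=\det(1_{p-1}-z'z'^*)\bigl(1-\zeta(1_q-z'^*z')^{-1}\zeta^*\bigr)$ you invoke is correct, and so is the observation that the lower-order principal minors are independent of the stripped row, which is what makes the induction close. Your argument for (iii) via $\int_G|\psi_{\pi^\vee}|^2\,dg=\dim\sigma/d_\pi$, the Cartan--Harish-Chandra decomposition, Schur orthogonality over $\widetilde K$, and the identity $\mathrm{tr}\,\sigma^\vee(\theta_z^2)=\mathrm{tr}\,\sigma(\theta_z^{-2})=\mathrm{tr}\bigl[\sigma_1^{-1}(1_p-zz^*)\otimes\sigma_2(1_q-z^*z)\bigr]$ also checks out, with $\mathrm{tr}\,S_{\sigma,0}=(\dim\sigma)\,S_{\sigma,0}$ since $S_{\sigma,0}$ is scalar.

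The one genuine shortfall is the one you flag yourself: you leave the Gamma-argument bookkeeping unverified. Given that the very reason the paper restates the lemma is to repair an error in precisely that bookkeeping in \cite{G}, this is not a corner that can be cut here. To make the argument complete you must (a) pin down exactly which monomial in the (trailing) principal minors of $1_p-zz^*$ equals $\langle\sigma_1^{-1}(1_p-zz^*)v,v\rangle/\|v\|^2$, including the power of $\det$ contributed by $d^*z$, $\det^s$, and $\det^\iota$; and (b) carry the shifted exponents through the $p$ (resp.\ $q$) induction steps and confirm the telescoping yields $\prod_i\Gamma(\iota-\kappa_i-(p+q-i)+s)/\Gamma(\iota-\kappa_i-(p-i)+s)$, resp.\ $\prod_i\Gamma(\iota_i-\kappa-(p+i-1)+s)/\Gamma(\iota_i-\kappa-(i-1)+s)$. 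A quick sanity check at $p=q=1$ (unit disc, $S_{\sigma,s}=\pi/(\iota-\kappa_1+s-1)$) agrees with formula (i), which is encouraging, but it does not substitute for the general verification.
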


We remark that the formulation of \cite{G} is in terms of holomorphic discrete series, and our reformulation here about anti-holomorphic case is just for convenience.
Recall from \cite{HLS} that
\[
\int_G \langle \omega(g)\phi,\phi\rangle \cdot \psi_\pi(g)dg=\frac{c^2_{\psi,\pi}}{d_\pi}\|\phi\|^2,
\]
where $c_{\psi,\pi}$ is the positive number such that
\[
\| P_{\psi,\pi}(\phi)\|=c_{\psi,\pi}\|\phi\|
\]
for $\phi\in \mathcal{H}_{\Lambda^\vee,\Lambda'}$ and $P_{\psi,\pi}$ the orthogonal projection from $\omega_\psi$ onto the closed subspace $\sigma_{\Lambda^\vee}\otimes \pi'$. We are interested in the explicit value of $c_{\psi, \pi}$. The formal degree $d_\pi$ is given by (\ref{fd}), depending on the measure of $G$.  Instead of specifying the explicit dependence, we may compare our zeta integral with the formal degree given by Lemma \ref{gar} (iii). This will enable us to find out $c_{\psi, \pi}$.

\begin{corollary} \label{7.3}The explicit value of $c_{\psi,\pi}$ is given by

Case (i):
\[
c^2_{\psi,\pi}=\prod^n_{i=1}\frac{\alpha_i-i+n-1-\gamma}{\alpha_i-i+n};
\]

Case (ii):
\[
c^2_{\psi,\pi}=\prod^n_{i=1}\frac{\gamma+i-\delta_i-2p}{\gamma+i-p},
\]
where $(\delta_1,\ldots,\delta_n):=(\beta_1,\ldots,\beta_p,-\alpha_{q-1},\ldots,-\alpha_1)$.
\end{corollary}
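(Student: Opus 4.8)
The plan is to derive Corollary \ref{7.3} by comparing the two evaluations of the zeta integral $\int_G \langle\omega(g)\phi,\phi\rangle\psi_\pi(g)\,dg$ that are already in hand. On one side, Theorem \ref{main} gives the integral explicitly as $\frac{\pi^n}{\dim\sigma_\Lambda}\prod_{i=1}^n (\alpha_i-i+n)^{-1}\|\phi\|^2$ in Case (i), and the analogous product in Case (ii). On the other side, the identity recalled from \cite{HLS} expresses the same integral as $\frac{c^2_{\psi,\pi}}{d_\pi}\|\phi\|^2$. Equating the two yields $c^2_{\psi,\pi} = d_\pi \cdot \frac{\pi^n}{\dim\sigma_\Lambda}\prod_i(\alpha_i-i+n)^{-1}$, so everything reduces to computing $d_\pi$ in the \emph{same} normalization of Haar measure that underlies Theorem \ref{main}, namely the one coming from the measure $d^*z$ on $D_{n,1}$ and mass-one measures on $U(n)$ and $U(1)$.

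To pin down $d_\pi$ without tracking the constant $C$ in (\ref{fd}) explicitly, I would use Lemma \ref{gar}(iii) as a calibration device. The anti-holomorphic discrete series $\pi_\lambda^\vee$ of $\widetilde{U}(n,1)$ has lowest $\widetilde{K}$-type $\sigma_{\Lambda^\vee}$, and Lemma \ref{gar}(iii) says that, with respect to the measure on $U(n,1)$ induced by $d^*z$ on $D_{n,1}$ (and unit-mass measures on the compact factors), one has $1/d_{\pi^\vee_\lambda} = S_{\sigma_{\Lambda^\vee},0}/\dim\sigma_{\Lambda^\vee}$. Since $d_{\pi^\vee_\lambda} = d_{\pi_\lambda}$ and $\dim\sigma_{\Lambda^\vee}=\dim\sigma_\Lambda$, this gives $d_\pi = \dim\sigma_\Lambda / S_{\sigma_{\Lambda^\vee},0}$. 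Substituting into the previous display, the factor $\dim\sigma_\Lambda$ cancels and we get
\[
c^2_{\psi,\pi} = \frac{\pi^n}{S_{\sigma_{\Lambda^\vee},0}}\cdot\prod_{i=1}^n\frac{1}{\alpha_i-i+n}
\]
in Case (i), with $S_{\sigma_{\Lambda^\vee},0}$ evaluated by Lemma \ref{gar} at $s=0$ (which is legitimate here because, for a discrete series lowest $\widetilde{K}$-type, the relevant factors are the nonzero ones appearing in (iii)).

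The remaining work is purely bookkeeping with the explicit weights from Section \ref{s2}. In Case (i), $\Lambda^\vee = [(-\alpha_n,\ldots,-\alpha_1)+\det^{(1-n)/2}]\otimes[-\gamma+\det^{-(1-n)/2}]$; here $\widetilde{U}(n,1)$ has $p=n$, $q=1$, $\sigma_2=\det^\iota$ is one-dimensional with $\iota = -\gamma+\tfrac{n-1}{2}$, and $\sigma_1$ has highest weight entries $\kappa_i = -\alpha_{n+1-i}+\tfrac{1-n}{2}$. Plugging these into Lemma \ref{gar}(i) with $s=0$ and reindexing, $S_{\sigma_{\Lambda^\vee},0}$ becomes $\pi^n\prod_{i=1}^n(\alpha_i-i+n-1-\gamma)^{-1}$, and after cancellation one lands exactly on $c^2_{\psi,\pi}=\prod_{i=1}^n\frac{\alpha_i-i+n-1-\gamma}{\alpha_i-i+n}$. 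Case (ii) is entirely parallel, now applying Lemma \ref{gar}(i) to the $\widetilde{U}(n,1)$ with the one-dimensional $\widetilde{U}(1)$-factor carrying weight $\gamma$, with $\sigma_1$ of highest weight $(\beta_1,\ldots,\beta_p,-\alpha_{q-1},\ldots,-\alpha_1)+\det^{(p-q)/2}$; writing $(\delta_1,\ldots,\delta_n)$ for this tuple of $\beta$'s and $-\alpha$'s as in the statement, the same computation produces $c^2_{\psi,\pi}=\prod_{i=1}^n\frac{\gamma+i-\delta_i-2p}{\gamma+i-p}$. I expect the main obstacle to be precisely this index-matching: one must be careful that the ``$p,q$'' of Lemma \ref{gar} refers to the pair $(n,1)$ for the group $\widetilde{U}(n,1)$ (not the signature $(p,q)$ of $V'$), that the half-integral $\det$-shifts line up correctly between the two cases, and that the reindexing $i\leftrightarrow n+1-i$ in the product over $\widetilde{U}(n)$-weights is done consistently — a sign error or off-by-one there would corrupt the final formula. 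Once the dictionary is fixed, the corollary drops out immediately.
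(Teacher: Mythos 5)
Your proposal is correct and follows essentially the same route as the paper: you equate the evaluation of the zeta integral from Theorem~\ref{main} (giving $T^{\pm}_{(n+1)/2}/\dim\sigma_\Lambda$) with the \cite{HLS} identity $c^2_{\psi,\pi}/d_\pi$, then eliminate $d_\pi$ via Lemma~\ref{gar}(iii), which yields $c^2_{\psi,\pi}=T^{\pm}_{(n+1)/2}/S_{\sigma_{\Lambda^\vee},0}$ and then reduces to the same bookkeeping with Lemma~\ref{gar}(i). The only difference is that you spell out the implicit use of $d_{\pi^\vee_\lambda}=d_{\pi_\lambda}$ and $\dim\sigma_{\Lambda^\vee}=\dim\sigma_\Lambda$, which the paper treats silently.
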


\begin{proof} The proof of Theorem \ref{main} shows that
\[
\frac{c^2_{\psi,\pi}}{d_\pi}=\frac{T^\pm_{(n+1)/2}}{\dim \sigma_\Lambda}.
\]
On the other hand, by Lemma \ref{gar} we have
\[
\frac{1}{d_\pi}=\frac{S_{\sigma_{\Lambda^\vee,0}}}{\dim \sigma_\Lambda}.
\]
Comparison of the last two equations yields
\[
c^2_{\psi,\pi}=\frac{T^\pm_{(n+1)/2}}{S_{\sigma_{\Lambda^\vee,0}}}=\frac{S_{\sigma_1\otimes 1, (n+1)/2}}{S_{\sigma_1\otimes\sigma_2,0}}\quad \textrm{or} \quad
\frac{S_{1\otimes \sigma_2, (n+1)/2}}{S_{\sigma_1\otimes\sigma_2,0}}
\]
in Case (i) or (ii) respectively. Plugging in the parameter $\Lambda^\vee$ gives the corollary.
\end{proof}

\end{document}